\newtheorem{theorem}{Theorem}[section]
\newenvironment{proof}[1][Proof]{\begin{trivlist}
\item[\hskip \labelsep {\bfseries #1}]}{\end{trivlist}}
\newenvironment{definition}[1][Definition]{\begin{trivlist}
\item[\hskip \labelsep {\bfseries #1}]}{\end{trivlist}}
\newcommand{\qed}{\nobreak \ifvmode \relax \else
      \ifdim\lastskip<1.5em \hskip-\lastskip
      \hskip1.5em plus0em minus0.5em \fi \nobreak
      \vrule height0.75em width0.5em depth0.25em\fi}
\title{Signatures of monic polynomials.}
\author{Norbert A'Campo}
\begin{document}

\maketitle

\begin{abstract}
Let $P:\mathbb{C}\to \mathbb{C}$ be a monic polynomial map of degree $d\geq 1$.  We call  
the inverse image of the union of the real and imaginary axis the geometric picture of the polynomial $P$. The geometric picture of a monic polynomial is a piece-wise smooth planar graph. Smooth isotopy classes relative to the $4d$  
 asymptotic ends at infinity of geometric pictures are called signatures. The set of signatures $\Sigma_d$ of monic degree $d$ polynomials is finite. We give a combinatorial characterization of the set of signatures $\Sigma_d$ and prove that the space of monic polynomials of given signature is contractible.  This construction leads to a real semi-algebraic cell decomposition 
 $$
 {\rm Pol}_d=\bigcup_{\sigma \in \Sigma_d} \{P \mid \sigma(P)=\sigma \}
 $$
 of the space ${\rm Pol}_d$ of monic polynomials of degree $d$.
 In this cell decomposition the classical discriminant locus $\Delta_d$ appears as a union of cells. 
 The complement of the classical discriminant $B_d:={\rm Pol}_d\setminus \Delta_d$ is a union of cells. The face operators of this cell decomposition of   the space $B_d$ are explicitly given. Since $B_d$ is a classifying space for the braid group, we obtain a finite complex that computes the 
group cohomology of the braid group with integral coefficients.  
\end{abstract} 

\section{Introduction.}

 Let $P:\mathbb{C} \to \mathbb{C}$ be a polynomial mapping. We assume that $P$ is monic, i.e. with  leading coefficient $1$. We call a polynomial $P$  balanced if its sub-leading coefficient vanishes which says that 
 the sum of its roots $P^{-1}(0)$ weighted by multiplicity equals $0$. A unique Tschirnhausen substitution $z=z-t$ will transform a 
 monic polynomial to a monic and balanced one. 
We call  the inverse image by the map $P$ 
of the union of the real and the imaginary axis the geometric picture $\pi_P$ of a monic polynomial $P$.

Geometric pictures of monic polynomials are
special graphs in the Gaussian plane $\mathbb{C}$. Their combinatorial restrictions are listed in the following statement.

 \begin{theorem}
 Let $P(z)$ be a monic polynomial of degree $d>0$. Its geometric picture $\pi_P$ is a smooth graph in $\mathbb{C}$
with the following properties:

1.  The graph has no cycles. The graph is a forest. The non-compact edges are properly embedded in $\mathbb{C}$.

2.  The complementary regions  have a $4$-colouring by symbols $A,B,C,D$.

3. The edges are oriented smooth curves and have a $2$-colouring  by symbols $R,I$. They carry the  symbol $R$ if the edge
separates $D$ and $A$ or $B$ and $C$ coloured regions.  They carry the  symbol $I$ if the edge
separates $A$ and $B$ or $C$ and $D$ coloured regions. The orientation is right-handed if one crosses the edge from
$D$ to $A$ or $A$ to $B$, and left-handed if one crosses $B$ to $C$ or $C$ to $D$.  

4. The picture has $4d$ edges that near infinity are asymptotic to the 
rays $re^{k\pi i/2d},\,r>0,\,k=0,1, \cdots ,4d-1$. 
The colours $R,I$ alternate and the orientations
of the $R$ coloured and also the $I$ coloured alternate between out-going and in-going.

5. Near infinity the sectors are coloured in the counter-clockwise orientation by the $4$-periodic sequence of symbols $A,B,C,D,A,B, \cdots $.

6. The graph can have $5$ types of vertices:
for the first $4$ types only $A,B$ or $B,C$ or $C,D$ or $D,A$ regions are incident and only edges of one color are incident, moreover for the fifth type, regions of all $4$ colours are incident and the colours appear in the counter clock-wise orientation as $A,B,C,D,A,B, \cdots $. So, in particular the graph has no terminal vertices.

7. At all points $p\in \pi_P$ the germ of the graph $\pi_P$ is
smoothly diffeomorphic to the germ at $0\in \mathbb{C}$ of $\{z \in \mathbb{C} \mid {\rm Re}z^k=0\}$ for some $k=1,2, \cdots$. 
 \end{theorem}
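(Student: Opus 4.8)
The plan is to derive all seven properties from two ingredients: the local normal form of a holomorphic map at each point, and a single global observation identifying $\pi_P$ as the zero set of one harmonic function. Throughout I write $z=x+iy$ and $P=u+iv$ with $u=\mathrm{Re}\,P$, $v=\mathrm{Im}\,P$ harmonic. Since the union of the two target axes is $\{w:\mathrm{Re}\,w\cdot\mathrm{Im}\,w=0\}$, we have $\pi_P=\{uv=0\}$; the colours $A,B,C,D$ label the pullbacks of the four open quadrants $\{u>0,v>0\},\{u<0,v>0\},\dots$, while $R$-edges lie in $P^{-1}(\mathbb{R})=\{v=0\}$ and $I$-edges in $P^{-1}(i\mathbb{R})=\{u=0\}$. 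Property 2 is then immediate: a connected component of the complement $\mathbb{C}\setminus\pi_P=P^{-1}(\mathbb{C}\setminus G)$ has connected image avoiding the grid $G$, hence lies in one open quadrant and receives one colour.

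For the local statements (7, and the local content of 3 and 6) I would fix $p$, let $k\ge 1$ be the local degree, and use the holomorphic normal form $P(z)=P(p)+\zeta^{k}$ in a biholomorphic coordinate $\zeta$ centred at $p$; the germ of $\pi_P$ at $p$ is diffeomorphic to the germ at $0$ of the $\zeta\mapsto P(p)+\zeta^{k}$ preimage of the grid, which I compute in three cases. If $P(p)$ lies in an open quadrant the germ is empty, so $p\notin\pi_P$. If $P(p)$ lies on one axis but $P(p)\ne 0$, the grid near $P(p)$ is a single line $\ell$, and $\zeta^{k}\in\ell-P(p)$ describes $k$ equally spaced lines through $0$, diffeomorphic to $\{\mathrm{Re}\,\zeta^{k}=0\}$; the $2k$ sectors alternate between the two quadrants bordering that axis and all incident edges share one colour, giving vertex types $1$--$4$ (for $k=1$ just a smooth arc). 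If $P(p)=0$, then using $\mathrm{Re}\,w\cdot\mathrm{Im}\,w=\tfrac12\mathrm{Im}(w^{2})$ the preimage is $\{\mathrm{Im}(\zeta^{2k})=0\}$, i.e. $2k$ equally spaced lines diffeomorphic to $\{\mathrm{Re}\,\zeta^{2k}=0\}$; since $\arg P=k\arg\zeta$ sweeps the quadrants $k$ times counter-clockwise, all four colours occur in cyclic order $A,B,C,D$, giving the type-$5$ vertices (the case $k=1$ being an ordinary transverse crossing over $0$). This yields property 7, the vertex list 6, and the absence of terminal vertices, since every vertex has even valence $\ge 4$.

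The crux is property 1, and the idea I would isolate is that $\pi_P=\{uv=0\}$ is exactly the zero locus of the single harmonic function $h:=2uv=\mathrm{Im}(P^{2})$. If $\pi_P$ contained a cycle, this embedded closed curve $\gamma$ would bound a compact disc $\Omega$ by the Jordan curve theorem, with $h\equiv 0$ on $\gamma=\partial\Omega$. As $h$ is harmonic on $\Omega$ and continuous up to the boundary, the maximum principle forces $h\equiv 0$ on $\Omega$; but $P^{2}$ is a non-constant polynomial, so by the open mapping theorem $P^{2}(\Omega)$ is open and cannot lie in $\mathbb{R}$, a contradiction. Hence $\pi_P$ has no cycle and is a forest, and properness of $P$ (compact fibres) shows the non-compact edges are properly embedded and run to infinity. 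I regard finding this one-harmonic-function description as the only genuine obstacle: once it is in hand the forest property is immediate, and it is far cleaner than arguing face by face that no complementary region is an annulus.

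It remains to treat the behaviour at infinity (4, 5) and the orientation bookkeeping (3, 4), for which I would use $P(z)=z^{d}(1+o(1))$ as $z\to\infty$, so that outside a large disc the picture is a small perturbation of that of $z^{d}$. For $z^{d}$ one has $\mathrm{Re}(z^{d})\cdot\mathrm{Im}(z^{d})=0$ precisely on $\arg z\in\tfrac{\pi}{2d}\mathbb{Z}$, giving the $4d$ asymptotic rays $re^{k\pi i/2d}$; along them $\arg P\approx d\arg z$ passes a multiple of $\pi/2$ at each ray, so the colours cycle $A,B,C,D$ counter-clockwise (property 5) and the separating ends are alternately $R$ and $I$ (property 4). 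Finally, since $P$ is holomorphic it is orientation-preserving, so orienting $P^{-1}(\mathbb{R})$ and $P^{-1}(i\mathbb{R})$ by pulling back the standard orientations of the target axes reproduces the right-/left-handed crossing rule of property 3 and the alternation of in-going and out-going ends in property 4. I would carry out the estimates behind the $o(1)$ perturbation only far enough to confirm that the sub-leading terms change neither the number nor the cyclic order of the ends, which is routine.
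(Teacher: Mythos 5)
Your proposal is correct and takes essentially the same route as the paper: the decisive step, ruling out cycles by observing that $\pi_P$ lies in the zero set of the harmonic function $\mathrm{Re}(P)\cdot\mathrm{Im}(P)=\tfrac12\mathrm{Im}(P^2)$ and then combining the maximum principle with the open mapping theorem, is exactly the paper's argument for property 1. The remaining properties, which the paper dismisses as clear after defining the colourings as pull-backs of the picture of $z$, you fill in correctly with the local normal form $\zeta\mapsto P(p)+\zeta^k$ and the asymptotics $P(z)=z^d(1+o(1))$.
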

 
\begin{proof}
The real and imaginary axis decompose the complex plane $\mathbb{C}$ 
in four regions coloured by $A,B,C,D$ according to 
the signs of the real part and the imaginary part  respectively $++,+-,--,-+$. The real and the imaginary axis are coloured
by $R,I$ and are oriented by the gradients of the real and the imaginary part. In fact, this is the colouring and the 
orientation of the picture $\pi_z$ of the degree $1$ polynomial $z$. 
The picture $\pi_P$ inherites the colouring for its regions and the colouring together with orientation of its edges by pulling back via the map $P$ the colouring and orientation of $\pi_z$.
Properties $2,3, \cdots 7$ are clear. 

For property $1$ we first observe that the function ${\bf Re}(P)*{\bf Im}(P):\mathbb{C}\to \mathbb{R}$ is harmonic. Indeed, ${\bf Re}(P)*{\bf Im}(P)=\dfrac{1}{2}{\bf Im}(P^2)$ and the imaginary part of the holomorphic map $P^2$ is harmonic. A minimal cycle $Z$  
in $\pi_P$ is a simply closed curve and 
would bound an  open bounded region $U$. Since   the function ${\bf Re}(P)*{\bf Im}(P)$ vanishes along $Z$, we would 
have   ${\bf Re}(P)*{\bf Im}(P)=0$ on $U$.
It follows that the image $P(U)$ of $P={\bf Re}(P)+i{\bf Im}(P)$ is contained in the union of the real and the imaginary axis in $\mathbb{C}$, contradicting the openness of the non constant holomorphic mapping $P$.  
\qed
\end{proof}

\begin{theorem}
For given degree $d$, there exist  only finitely many  isotopy classes of graphs satisfying the $7$ properties 
of Theorem $1.1$.
\end{theorem}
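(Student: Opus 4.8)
The plan is to reduce the statement to a finite combinatorial count and then invoke the standard fact that a finite graph admits only finitely many planar isotopy types. First I would compactify by truncation: cut each non-compact edge so that its end lands on one of $4d$ marked points placed, in the prescribed cyclic order, on the circle at infinity $\partial\bar D$ of a closed disc $\bar D\supset\mathbb{C}$. This turns $\pi_P$ into a \emph{compact} graph $\tilde G\subset\bar D$ whose vertices are the $V$ genuine branch vertices of $\pi_P$ together with the $4d$ boundary marked points (each of valence $1$), and whose edges are the compact edges of $\pi_P$ together with the truncated non-compact ones. By Property $1$ of Theorem $1.1$ the graph $\pi_P$ carries no cycles, and truncation creates none, so $\tilde G$ is a forest. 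By Property $7$ every branch vertex has a germ of the form $\{\mathrm{Re}\,z^k=0\}$ with $k\geq 2$, hence valence $2k\geq 4$; and by Property $6$ there are no terminal vertices, so no interior vertex has valence $1$.

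Next I would run an Euler-characteristic count on $\tilde G$. Since a compact tree with at least one edge must have a valence-$1$ vertex, and $\pi_P$ has none, every connected component of $\pi_P$ is non-compact; thus each of the $c'$ components of $\tilde G$ meets the boundary circle and $c'\geq 1$. Because $\tilde G$ is a forest with $c'$ components, summing over all its vertices gives $\sum_{v}(\mathrm{val}(v)-2)=-2c'$. The $4d$ boundary marked points contribute $-4d$ to this sum, so $\sum_{\text{branch }v}(\mathrm{val}(v)-2)=4d-2c'$. As each branch vertex satisfies $\mathrm{val}(v)-2\geq 2$, the number of branch vertices is bounded: $V\leq 2d-c'\leq 2d-1$. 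Consequently the total branch valence $\sum_{\text{branch}}\mathrm{val}(v)=4d-2c'+2V$ and the number of edges $E=V+4d-c'$ are likewise bounded by explicit functions of $d$.

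Finally I would pass from bounded combinatorial size to finiteness of isotopy classes. There are only finitely many abstract graphs with at most $2d-1$ interior vertices, the $4d$ labelled ends, and boundedly many edges. For a fixed such abstract graph, its proper isotopy classes of embeddings in $\mathbb{C}$ \emph{relative to the $4d$ asymptotic ends} are determined by a rotation system, i.e. the cyclic order of edge-germs at each vertex, together with the prescribed cyclic assignment of the ends to the rays $re^{k\pi i/2d}$; for a planar (genus-$0$) graph with fixed behaviour at infinity these data are finite in number. The $4$-colouring of the regions and the $R/I$-colouring and orientation of the edges (Properties $2$--$5$) are then forced by the embedding, so they do not enlarge the count. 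Hence $\Sigma_d$ is finite.

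The step I expect to be the genuine obstacle is the last one: making rigorous that the \emph{smooth} isotopy class relative to the asymptotic ends is faithfully recorded by this finite combinatorial datum, i.e. that two pictures with the same abstract graph, the same rotation system, and the same end-assignment are actually smoothly isotopic through properly embedded graphs fixing the ends at infinity. This is the combinatorial classification of cellularly embedded graphs, but adapted to the non-compact, relative-to-infinity situation and to the piece-wise smooth category. The forest property should simplify matters, since there are no non-trivial cycles carrying extra twisting, and the rigidity at infinity removes the usual ambiguity of the unbounded complementary region; so I expect the reduction to go through, but it is where the care is needed.
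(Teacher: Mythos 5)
Your proposal is correct and follows essentially the same route as the paper: compactify with $4d$ ideal/boundary vertices, combine the forest property with the valence $\geq 4$ bound at interior vertices in an Euler-characteristic count to get at most $2d-1$ interior vertices, and then invoke finiteness of isotopy classes (rel the ends at infinity) of planar forests of bounded size. Your discussion of the final step, passing from the finite combinatorial datum to the smooth isotopy class, is more explicit than the paper's, which simply asserts that finiteness.
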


\begin{proof}
We compactify the graph by adding $4d$ ideal vertices at infinity, one  for each ray, see Property $4$. 
Let $v$ be the number of vertices and $v'$ be the number of inner (non-ideal) vertices of the compactified graph. 
Since the degree of an inner vertex is at least $4$, the number of incidence pairs of a vertex, finite or ideal, 
and edge is at least $4v'+4d$. So the number $e$ of edges is at least $2v'+2d$.
Since the graph is an non-empty forest, its Euler number is at least $1$. Hence
$$1\leq v-e \leq (v'+4d)-(2v'+2d)$$
showing $v'\leq 2d-1$ and $v\leq 6d-1$. The statement follows, since the number of isotopy classes, relative to infinity, of planar forests with $4d$ ideal fixed terminal vertices at infinity and at most
$2d-1$ finite vertices is finite.
 \qed
\end{proof}

\begin{definition}
A signature of degree $d$ is a smooth isotopy class of graphs that satisfy the $7$ properties of Theorem $1.1$.
\end{definition}

Equipped with the Hausdorff topology of proper closed subsets in $\mathbb{C}$, a signature becomes a topological space of planar graphs. 
Classical theorems of Rheinhold  Baer, David Epstein and Jean Cerf in planar topology tell us that a signature is a contractible space. See 
the thesis of Yves Ladegaillerie for the study of spaces of graphs in surfaces.

The following theorems are the main results.  Every signature is realized by a monic polynomial. 

\begin{theorem}
 Let $\sigma$ be a signature of degree $d>0$. Then there exists some  $P\in {\rm Pol}_d$ whose geometric picture belongs to $\sigma$.
\end{theorem}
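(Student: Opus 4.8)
The plan is to realize $\sigma$ as a holomorphic branched cover of the Riemann sphere, obtained by transporting the standard complex structure of the target through a purely topological model map, and then to recognize the resulting rational map as a monic polynomial. First I would pass to the sphere on both sides. On the target, the union of the two axes closes up on $\hat{\mathbb{C}}=\mathbb{C}\cup\{\infty\}$ to a graph with the two vertices $0$ and $\infty$, four edges (the four half--axes), and the four open quadrants $Q_A,Q_B,Q_C,Q_D$ as faces. On the domain I compactify $\mathbb{C}$ by a single point $\infty$; by Properties $4$ and $5$ the $4d$ asymptotic rays all converge there, so $\infty$ becomes one vertex of valence $4d$ around which the colours run through $A,B,C,D$ exactly $d$ times. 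Writing $\hat\Gamma\subset S^2$ for the resulting closed, coloured, oriented graph representing $\sigma$, it is cellular and, being the one--point compactification of a planar forest, has Euler characteristic $2$, so its underlying surface is a sphere.

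Next I would build a topological branched covering $f:S^2\to\hat{\mathbb{C}}$. On each complementary region I send the region homeomorphically and orientation--preservingly onto the open quadrant prescribed by its colour; across the interior of each edge the two adjacent regions map to the two quadrants flanking the corresponding half--axis, so the two homeomorphisms glue to an orientation--preserving map crossing that axis. The colour and orientation rules (Properties $2,3,5,6$) guarantee that these local homeomorphisms are mutually consistent around every edge, and Property $7$ identifies the germ at each vertex with $\{{\rm Re}\,z^{k}=0\}$, i.e.\ with the local model $z\mapsto z^{k}$ of a branch point; in particular the added vertex $\infty$ has local model $z\mapsto z^{d}$ over the target point $\infty$. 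Thus $f$ is a genuine orientation--preserving branched cover of degree $d$, whose branch points are the vertices of valence $\geq 6$ together with $\infty$.

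I would then pull back the standard complex structure of $\hat{\mathbb{C}}$ by $f$. Away from the finitely many branch points this defines a complex structure on $S^2$, and near a branch point the local model $z\mapsto z^{k}$ is holomorphic, so the structure extends across it. The resulting Riemann surface has genus $0$; by uniformization it is biholomorphic to $\hat{\mathbb{C}}$, and in such a coordinate $f$ becomes a holomorphic self--map $R$ of $\hat{\mathbb{C}}$, that is a rational map of degree $d$. By construction $\infty$ is the unique preimage of the target $\infty$, with multiplicity $d$, so $R$ has a single pole, of order $d$, and is therefore a polynomial of degree $d$ whose geometric picture is $f^{-1}(\text{axes})=\hat\Gamma\cap\mathbb{C}$. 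Finally I would normalize the coordinates: a positive dilation $w\mapsto\rho w$ of the target preserves the axes and scales the leading coefficient by $\rho$, while pre--composition with $z\mapsto \lambda z$ rotates and rescales the whole picture. Taking $\lambda=r e^{i\theta_0}$, where $\theta_0$ is the angular offset of the $4d$ ends of $R$ from the rays $re^{k\pi i/2d}$, rotates the ends onto those rays, and a suitable choice of $\rho,r>0$ makes the leading coefficient equal to $1$. The resulting $P$ is monic of degree $d$, and its picture is combinatorially the coloured, oriented graph underlying $\sigma$; by the planar--topology rigidity of Baer, Epstein and Cerf cited above, combinatorial agreement together with the marking of the $4d$ ends forces $\pi_P$ to be isotopic to the chosen representative, so $\pi_P\in\sigma$.

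I expect the principal difficulties to lie in two places. The first is assembling the local region-- and edge--homeomorphisms into a single globally consistent branched cover: this is where Properties $2$--$7$ must be used in full, so that no monodromy obstruction survives around any vertex and the cyclic colour patterns of $\hat\Gamma$ match the covering models of $\hat{\mathbb{C}}$ exactly. The second is the final normalization, where the argument of the leading coefficient and the colouring near infinity must be matched precisely, not merely modulo the rotational symmetry of the rays, so that $\pi_P$ lands in the prescribed signature rather than in a rotated cousin. By contrast, the intervening uniformization step and the identification of $R$ as a degree $d$ polynomial are standard.
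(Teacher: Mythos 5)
Your proposal is correct and follows essentially the same route as the paper: build a topological/smooth model map realizing the graph as the preimage of the axes, pull back the standard complex structure, uniformize, and recognize the result as a degree $d$ polynomial. The only differences are in packaging --- you compactify to the sphere, detect polynomiality by counting the pole at $\infty$, and normalize a posteriori by affine substitutions, whereas the paper keeps the model map on $\mathbb{C}$ with the asymptotic condition $f(z)/z^d\to 1$ built in and concludes via Rouch\'e's theorem --- but the underlying argument is the same.
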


The space of monic polynomials with given signature is contractible.

\begin{theorem}
Let $\sigma$ be a signature of degree $d>0$. The space $\{P\in {Pol}_d \mid \sigma(P)=\sigma\}$ is contractible.
\end{theorem}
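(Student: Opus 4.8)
The plan is to reduce the statement to a contraction of a space of configurations, by first showing that a monic polynomial is completely reconstructed from its geometric picture. The key observation, read off from Theorem~1.1, is that the type-$5$ vertices of $\pi_P$ are exactly the roots of $P$, and that a type-$5$ vertex of valence $4m$ is a root of multiplicity $m$: near such a root the map is modelled on $z\mapsto z^m$, whose preimage of the two axes has $4m$ sectors carrying all four colours. Since $P$ is monic, this yields
\[
P(z)=\prod_{v\ \text{type }5}(z-v)^{m(v)},\qquad \sum_{v}m(v)=d,
\]
so $P$ is determined by the underlying coloured embedded graph. Hence $P\mapsto\pi_P$ is a continuous injection of $\{P\mid\sigma(P)=\sigma\}$ onto a semi-algebraic set of admissible pictures, and it suffices to contract the latter. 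I would also record the harmonic function $h=\tfrac12\,{\bf Im}(P^2)$, whose nodal set is $\pi_P$ (as in the proof of Theorem~1.1) and whose critical points are precisely the zeros of $PP'$, i.e. the roots of $P$ (the nodal saddles, type $5$) together with the critical points of $P$, the latter lying on $\pi_P$ exactly when their critical value sits on an axis, producing type-$1$--$4$ vertices.

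Next I would reduce by symmetry and then build the contraction. Scaling by $t>0$ replaces $P(z)$ by $t^{-d}P(tz)$ and $\pi_P$ by $t^{-1}\pi_P$, while translation by $b\in\mathbb{C}$ replaces $P(z)$ by $P(z-b)$ and translates $\pi_P$; both preserve the four-colouring and the $4d$ asymptotic ends, hence the signature. Thus $\{P\mid\sigma(P)=\sigma\}$ is invariant under the contractible group $\mathbb{R}_{>0}\ltimes\mathbb{C}$, which lets me normalise to a balanced, suitably scaled representative. To contract what remains, I would transport a contraction of the space of graphs: by the cited theorems of Baer, Epstein, Cerf and Ladegaillerie the signature $\sigma$, viewed as a space of embedded planar graphs relative to the $4d$ fixed ends, is contractible through an ambient isotopy $\Phi_s$ of $\mathbb{C}$ carrying any representative to a canonical model $\Gamma_\ast$ provided by Theorem~1.3. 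Applying the reconstruction $\Gamma\mapsto\prod_v(z-v)^{m(v)}$ to the moving graph $\Phi_s(\pi_P)$ produces a path of monic polynomials $P_s$ from $P$ to a fixed representative, and the task is to show that $P_s$ stays in the stratum and depends continuously on $(P,s)$.

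The main obstacle is to show that the resulting path never crosses a wall of the stratum. Because $\Phi_s$ is an ambient homeomorphism the type-$5$ vertices cannot collide, so the roots of $P_s$ stay distinct with constant multiplicities and the first kind of wall is avoided automatically. The genuine difficulty is the second kind: preventing a critical value of $P_s$ from crossing an axis, equivalently preventing a saddle of $h_s=\tfrac12\,{\bf Im}(P_s^2)$ that sits off the nodal set from moving onto it (or vice versa), since such an event creates or destroys a type-$1$--$4$ vertex and changes the signature. I would attack this through the global Morse theory of $h_s$: its nodal set is $\pi_{P_s}$, its critical points are the zeros of $P_s P_s'$, and a maximum-principle argument shows that no new compact nodal component can appear, consistent with Theorem~1.1(1) that $\pi_P$ is a forest, so the only possible change is a nodal saddle collision, which one must exclude by a careful choice of the model $\Gamma_\ast$ and, if necessary, a reparametrisation of $\Phi_s$ that keeps the critical values of $P_s$ within their quadrants along straight rays. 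I expect the most delicate technical point to be the uniform control near infinity, where the $4d$ asymptotic rays $re^{k\pi i/2d}$ must be matched continuously; this is exactly where the monic normalisation and the asymptotic description of Theorem~1.1(4) enter. Granting this control, the injectivity established in the first step turns the constructed deformation into a contraction of $\{P\mid\sigma(P)=\sigma\}$ onto its canonical representative, which proves the theorem.
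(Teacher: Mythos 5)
Your opening observation is correct and worth keeping: the type-$5$ vertices of $\pi_P$, counted with the multiplicity read off from their valence, are exactly the roots of $P$, so a monic polynomial is determined by its picture. But note that this makes your first reduction circular rather than a simplification: the map sending $P$ to its multiset of roots is already a homeomorphism ${\rm Pol}_d\cong {\rm Sym}^d\mathbb{C}$, so ``the semi-algebraic set of admissible pictures'' is the original stratum $\{P\mid\sigma(P)=\sigma\}$ under another name, and contracting it is exactly the original problem.

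The genuine gap is in the contraction itself. An ambient isotopy $\Phi_s$ carrying $\pi_P$ to a model graph moves the type-$5$ vertices, and the polynomial $P_s=\prod_v(z-\Phi_s(v))^{m(v)}$ reconstructed from them has, in general, a picture that is \emph{not} $\Phi_s(\pi_P)$ and need not have signature $\sigma$: as you yourself note, a critical value of $P_s$ can cross an axis and create or destroy a type-$1$--$4$ vertex. You name this as the main obstacle but do not resolve it --- ``a careful choice of the model and, if necessary, a reparametrisation of $\Phi_s$'' is precisely the missing content, and there is no reason a single ambient isotopy of the plane should control the critical values of the reconstructed polynomials along the whole path. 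The paper circumvents this entirely by never moving roots. It forms the space $E_\Gamma$ of pairs $(f,\gamma)$, where $\gamma$ ranges over the space $\Gamma$ of graphs in the class $\sigma$ (contractible by Baer--Epstein) and $f$ is a smooth open map realizing $\gamma$ subject to the five properties from the proof of Theorem 1.3; by Cerf this is a fiber bundle over $\Gamma$ with contractible fibers, hence contractible. It then quotients by the contractible group $G_{\mathbb{C},\infty}$ of diffeomorphisms acting freely, and uses the Riemann mapping theorem together with Rouch\'e's theorem to identify the quotient with the space of balanced monic polynomials of signature $\sigma$ (finishing with the Tschirnhausen action). In that construction the signature is preserved by definition and no wall-crossing analysis is needed; that analysis is the step your proposal would have to supply and does not.
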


\section{Bi-regular polynomials.}
As intermezzo we first study most generic monic polynomials. The corresponding cells are the open cells.
We call the map
$P$ \textit{bi-regular} if $0 \in \mathbb{R}$ is a
regular value for both mappings, the real aswell as the imaginary part $\textbf{Re}(P):\mathbb{C} \to \mathbb{R}$
and $\textbf{Im}(P):\mathbb{C}  \to \mathbb{R}$. The geometric picture of a bi-regular polynomial $P$ of degree $d>0$ is the union of the
oriented inverse images of $0\in \mathbb{R}$ for
the maps $\textbf{Re}(P)$ and $\textbf{Im}(P)$. It has $d$ vertices of valency $4$ at the roots of $P$ and $4d$ non compact terminal edges. Here we show as an example the geometric picture of  the bi-regular polynomial
$$P=z^{13}-6z^7+z^4-z^3+5z^2+z+3+2i$$
See the picture in Fig. $1$ that we have made with SAGE. The blue lines are the inverse image by $P$ of the oriented
(from $-\infty$ to $+\infty$) real axis, and are also the inverse image by $\textbf{Im}(P)$ of $0\in \mathbb{R}$.
The green ones are the inverse image by $P$ of the oriented (from $-i\infty$ to $+i\infty$) imaginary axis and also
the inverse image by $\textbf{Re}(P)$ of $0\in \mathbb{R}$. There are
$13$ transversal intersection points of a green and blue line, which of course are the roots of the
polynomial $P$. At each root a blue and a green line intersect orthogonally since the polynomial map $P$ is conformal and hence its differential at regular points preserves angles. Each blue or green line
is an properly embedded copy of the oriented real line in the plane. Near infinity those lines are
asymptotic to rays emanating from the origin.
 For a bi-regular polynomial of degree $d$, the inverse image of the real axis is a disjoint union of $d$ copies of an oriented  real line, having $2d$ ends that are asymptotic to
rays directed by the $2d$-roots of unity $\theta$ with $\theta^d=\pm 1$. The inverse image of the imaginary axis is a similar disjoint union of $d$ copies of a real line, except that
the ends are asymptotic to the directions of the $4d$-roots of unity $\theta$ with $\theta^d=\pm i$. We orient the asymptotic rays from $0$ to $\infty$. The
orientation of a curve of the picture and its asymptotic ray match if $\theta^d=+1,+i$ and are opposite if $\theta^d=-1,-i$.  We call the geometric picture of a bi-regular polynomial a bi-regular picture.

We say that two bi-regular pictures $\pi,\pi'$ are combinatorially equivalent if there exists a regular proper ambient isotopy that keeps the direction of the asymptotics fixed and that
moves $\pi$ to $\pi'$.  A bi-regular signature is a combinatorial equivalence class of bi-regular pictures.

In the next section we will count  the number of bi-regular signatures 
of degree $d$. A bi-regular signature is a signature such that every vertex has valence $4$ at which 
the incident $4$ sectors have $4$ different colours.

This is especially interesting for following a root $r_t$ continuously
given  by a family $P_t$ of polynomials. What is still missing, is an understanding of the wall crossings phenomena  between different connected components of bi-regular polynomials.  In particular we do in particular not know the dual graph of those components 
for which a component becomes a vertex and a pair of vertices  is connected by an edge if  one gets from one component to the other by a transversal wall crossing. We plan applications to computer graphics and robotics in future.
\bigskip

\includegraphics[scale=0.7]{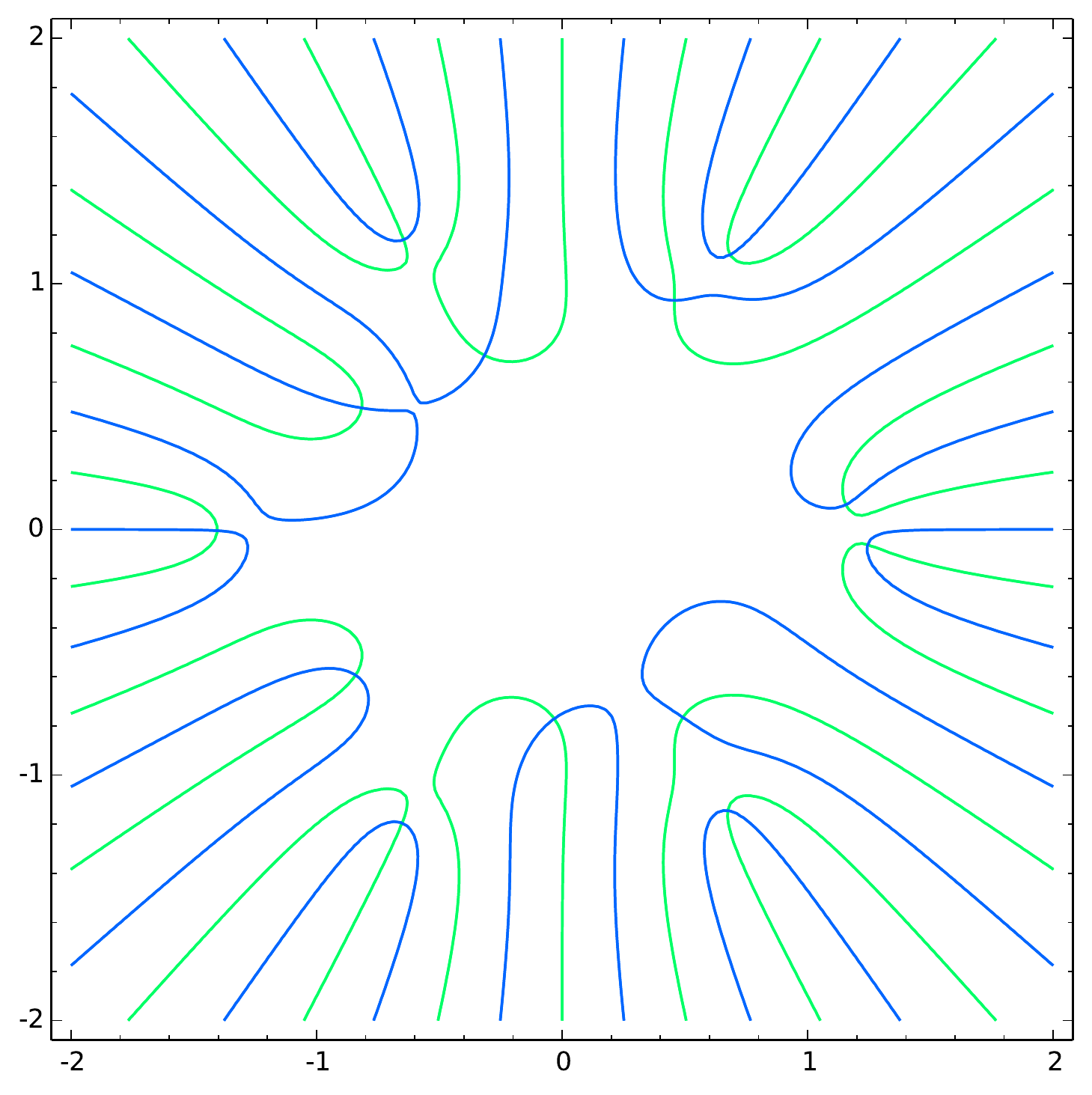}

\centerline{Fig. $1$. $P=z^{13}-6z^7+z^4-z^3+5z^2+z+3+2i$.}

\section{Counting bi-regular and sub bi-regular signatures.}
\noindent

Let $P:\mathbb{C} \to \mathbb{C}$ be a monic polynomial mapping of degree $d$.  We assume that $0\in\mathbb{R}$ is a regular value of the imaginary part mapping of ${\bf Im}(P):\mathbb{C}\to \mathbb{R}$.
The inverse image $P^{-1}(\mathbb{R}) \subset \mathbb{C}$ is a system of $d$ smoothly embedded copies of the real line. The orientations of this systeme can be reconstructed,
since the positive end of the real axis is an asymptotic ray with matching orientations and since the matching and non-matching ends alternate if one goes from one
$2d$-root of unity to the next. So we can forget the orientations of the components of   $P^{-1}(\mathbb{R}) \subset \mathbb{C}$ without loosing information.
Combinatorially we can think of $P^{-1}(\mathbb{R}) \subset \mathbb{C}$ as a system of $d$ disjoint diagonals and edges in a $2d$-gon. The number of possible systems $D(d)$
of $d$ non intersecting
diagonals or edges in a  $2d$- gon  is given by a Catalan number. We put $D(0)=1$, and  have $D(1)=1,\,D(2)=2$ and for $d\geq 3$
of $d$ non intersecting
diagonals or edges in a  $2d$- gon  is given by a Catalan number. Here in particular \textit{Disjoint} means having no common vertices!  Moreover, for $d\geq 3$
the recurrence relation
$$
D(d+1)=\sum_{0\leq i \leq d} D(i)D(d-i)
$$
holds.
This recurrence relation is obtained by splitting a $2(d+1)$-gon along the curve that has the first vertex as end.
This is the recurrence relation for the Catalan numbers, hence
$D(d)=\frac{1}{d+1}{2d \choose d}$. 
The first Catalan numbers for $d=1,2, \cdots$ are
$$
1,2,5,14,42,132,429,1430
$$
For the inverse image of the imaginary axis we also have  $D(d)$ possibilities. The two possibilities are very depending, since each
component of the inverse image of the real axis intersects  transversally the inverse image of  the imaginary axis once. So we need a combined counting.
Let ${\rm Pict}(d)$ be the number of possible combinatorial types of pictures. We put ${\rm Pict}(0)=1$ and have ${\rm Pict}(1)=1$.
For $d\geq 2$ we have the recurrence relation
$$
{\rm Pict}(d+1)=\sum_{0\leq i,0\leq j,0\leq k,0\leq l, i+j+k+l=d} {\rm Pict}(i){\rm Pict}(j){\rm Pict}(k){\rm Pict}(l)
$$
which is obtained from the following splitting: let $A$ be the curve in $P^{-1}(\mathbb{R})$ that is asymptotic to the positive real axis and $B$ be
the curve in $P^{-1}(i\mathbb{R})$ that intersects $A$.  The pair of curves $(A,B)$ splits the complex plane in four regions. The summing indices  $i,j,k,l$ are the number of roots of a bi-regular polynomial in these regions. The Catalan recurrence expresses $D(d+1)$ as a sum of products $D(a)D(b)$ with $a+b=d$. The recurrence for
${\rm Pict}(d+1)$ is similar, except that ${\rm Pict}(d+1)$ is a sum of $4$-factor products.
In order to integrate this recursion we first computed with a PARI  program the first $15$ terms. The result was
$$
1,4,22,140,969,7084,53820,420732,3362260,27343888,
$$
$$
225568798, 1882933364, 15875338990, 134993766600, 1156393243320
$$
A search in the on-line Encyclopedia of Integral Sequences founded in 1964 by N.J.A. Sloane, see \textit{https:/oeis.org/}  identifies this sequence
with the sequence $A002293$ and shows to us many interesting interpretations.
Also we learn, that the closed formula is of Fuss-Catalan type:
$$
{\rm Pict}(d)=\frac{1}{3d+1}{4d \choose d}
$$
By induction upon $d$ we check that the proposed expression satisfies the recursion relation of ${\rm Pict}(d)$.

The space of bi-regular polynomials of degree $d$ is an open subset in the space of all degree $d$ polynomials. 
\bigskip

 \includegraphics[scale=0.7]{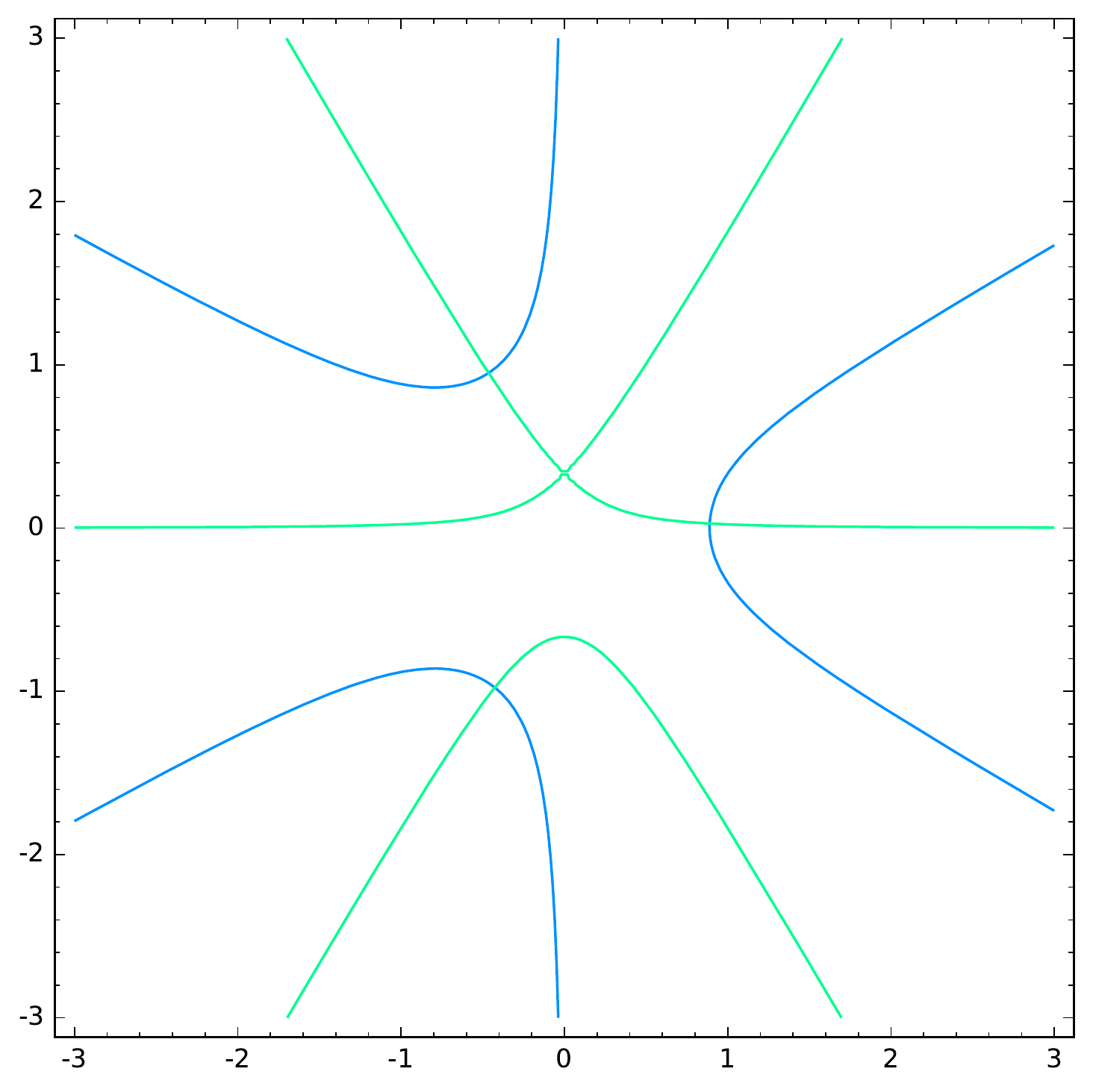}
 
  \centerline{Fig. $2$. $P(z)=(z-i/3)^3+(z-i/3)^2+1$.}
\bigskip  

The connected components
correspond bijectively to pictures of bi-regular polynomials. We say that two components are neighbours if separated by a wall of real co-dimension $1$. In this case we also say that two bi-regular pictures or bi-regular signaturs  are neighbours. The signature of the picture of Fig. $2$ defines such wall that separates two bi-regular components. The picture of Fig. $2$ allows two smoothings that yield bi-regular pictures.

The discriminant $\Delta\subset \mathbb{C}^d$ is the space of monic polynomial mappings $P:\mathbb{C} \to \mathbb{C}$ having $0\in \mathbb{C}$ as critical value. Clearly, a polynomial $P$ belongs to $\Delta$ if and only if the mappings $\textbf{Re}(P):\mathbb{C} \to \mathbb{R}$
and $\textbf{Im}(P):\mathbb{C}  \to \mathbb{R}$ have a critical point with critical value $0$ in common. It follows that each cell of bi-regular polynomials is contained in the complement of $\Delta$.
 It also follows that  the co-dimension $1$ walls are contained in the complement of $\Delta$.
 
 The polynomial $P(z)=z^3-z/3+28/27$  is regular above $0$ as mapping from $\mathbb{C}$ to $\mathbb{C}$, but
 the map $\textbf{Re}(P):\mathbb{C} \to \mathbb{R}$ has two critical points with $0$ as value. The polynomial $P$ belongs to a stratum of real co-dimension $2$. See Fig. $3$. The two critical points of $\textbf{Re}(P):\mathbb{C} \to \mathbb{R}$ can fuse together in a stratum of real co-dimension $3$. See the picture of $Q(z)=z^3+1$ in Fig. $4$. 
\bigskip

 \includegraphics[scale=0.7]{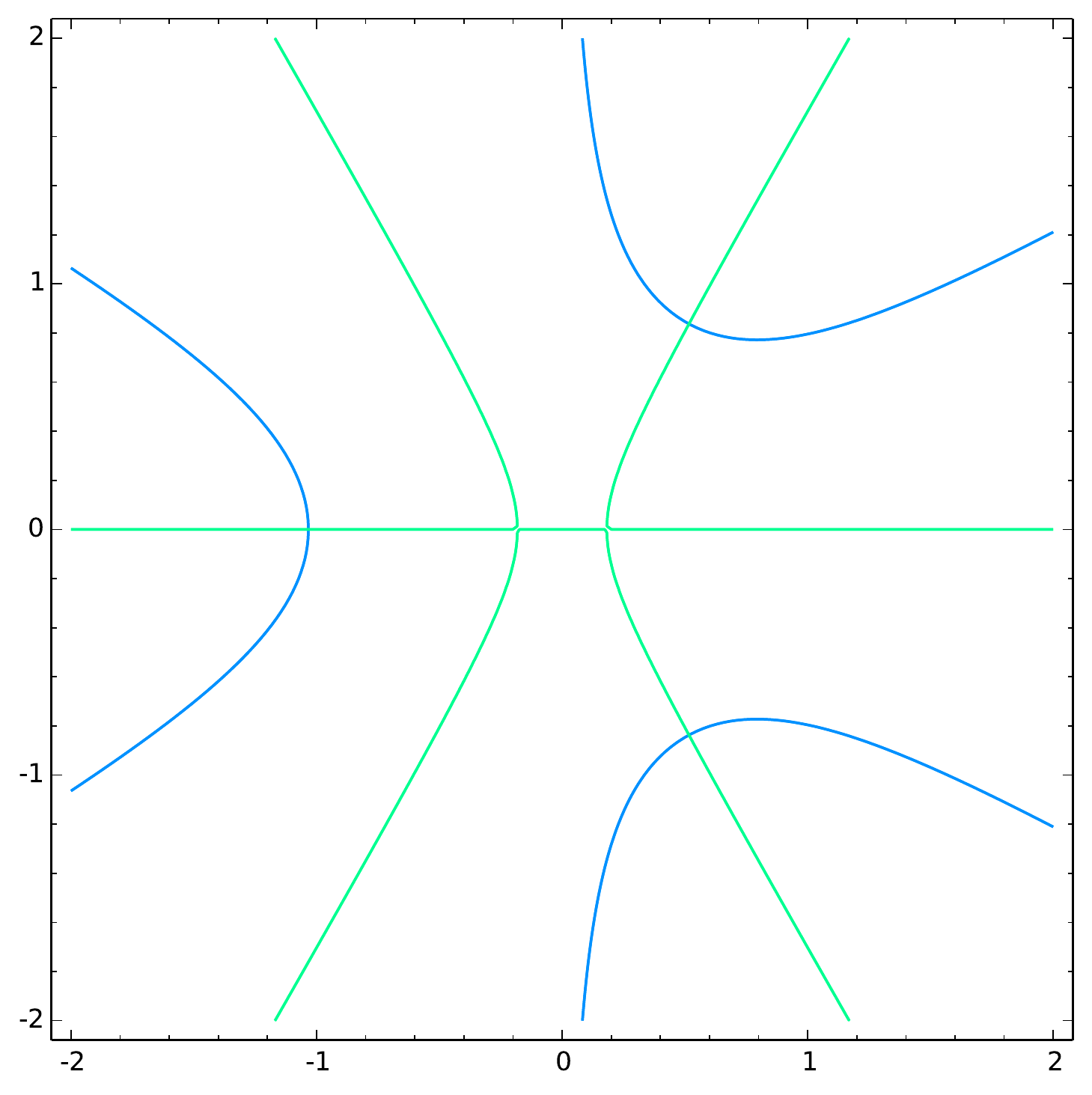}
 
  \centerline{Fig. $3$. $P(z)=z^3-\dfrac{1}{10}z+1$.}

\bigskip

 The polynomial $P(z)=z^3-\dfrac{1}{10}z+1$  is regular above $0$ as mapping from $\mathbb{C}$ to $\mathbb{C}$, but
 the map $\textbf{Re}(P):\mathbb{C} \to \mathbb{R}$ has two critical points with $0$ as value. The polynomial $P$ belongs to a stratum of real co-dimension $2$. See Fig. $3$. The two critical points of $\textbf{Re}(P):\mathbb{C} \to \mathbb{R}$ can fuse together in a stratum of real co-dimension $3$. See the picture of $Q(z)=z^3+1$ in Fig. $4$.
 
The two critical points of $\textbf{Re}(P):\mathbb{C} \to \mathbb{R}$
can be smoothed, one a lot, the other less, see Fig. $5$. 

Again with a PARI program we could compute the numbers of co-dimension $1$  walls in degree $d=1,2,3, ...$ .
We get:
$$ 0, 4, 48, 480, 4560, 42504, 393120, 3624768, 33390720, 307618740$$
This sequence is not identified by the Sloane data base.
\bigskip

\noindent{\bf Problems.} Let $B(d,c)$ be the number of cells in $B_d$ of codimension $c$. Study the generating series
$$C(x,y)=\sum_{d,c} B(d,c) x^dy^c\in \mathbb{Z}[[x,y]]$$ 
and the coefficients
$$C_c(x)=\sum_d B(d,c)x^d\in \mathbb{Z}[[x]]$$
Study the differential operators that annilate $C(x,y),\,C_c(x)$. Find closed expressions for $B(d,c)$. 

\bigskip

 \includegraphics[scale=0.6]{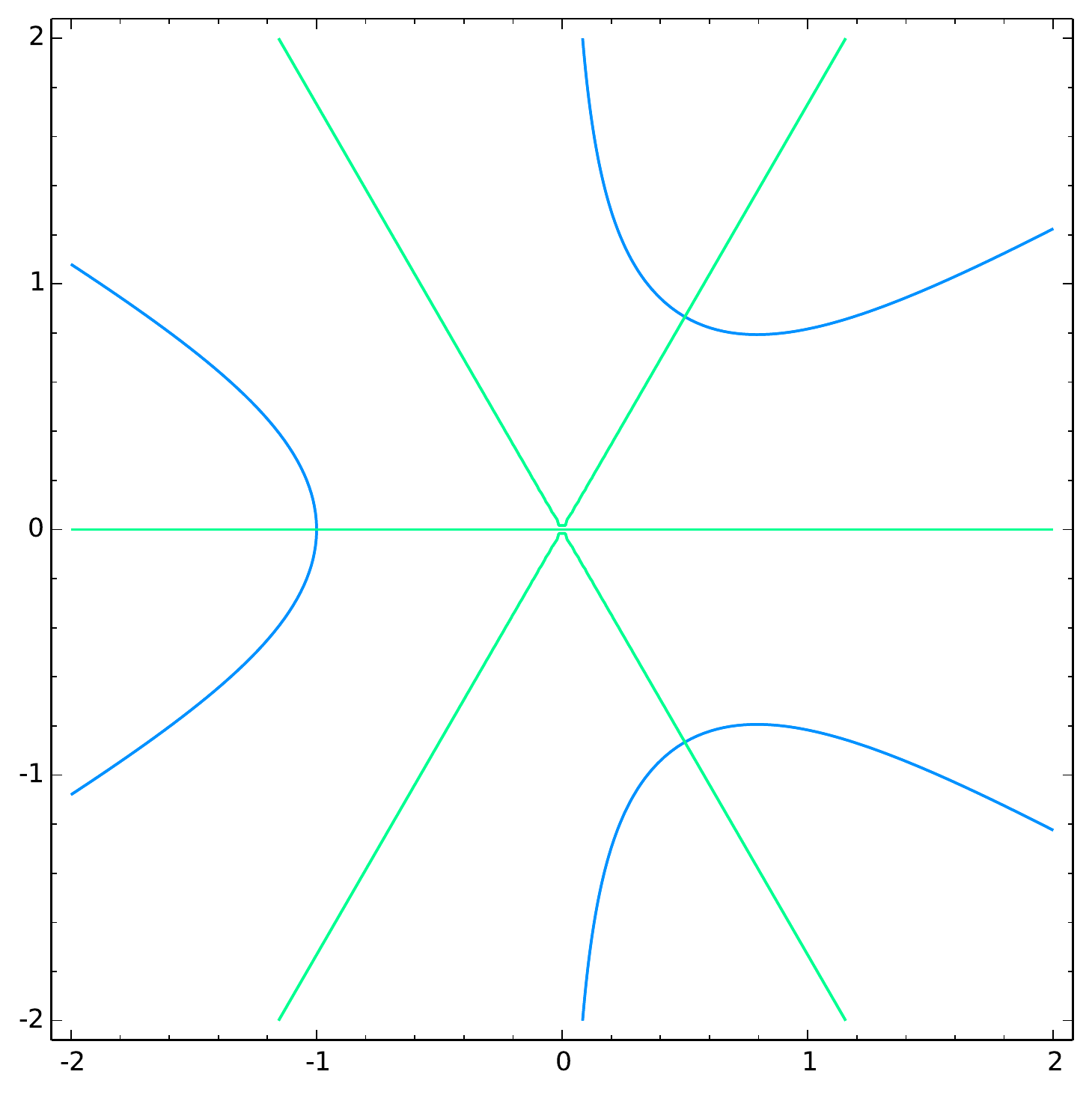}

\centerline{Fig. $4$. $Q(z)=z^3+1$.}

 \includegraphics[scale=0.6]{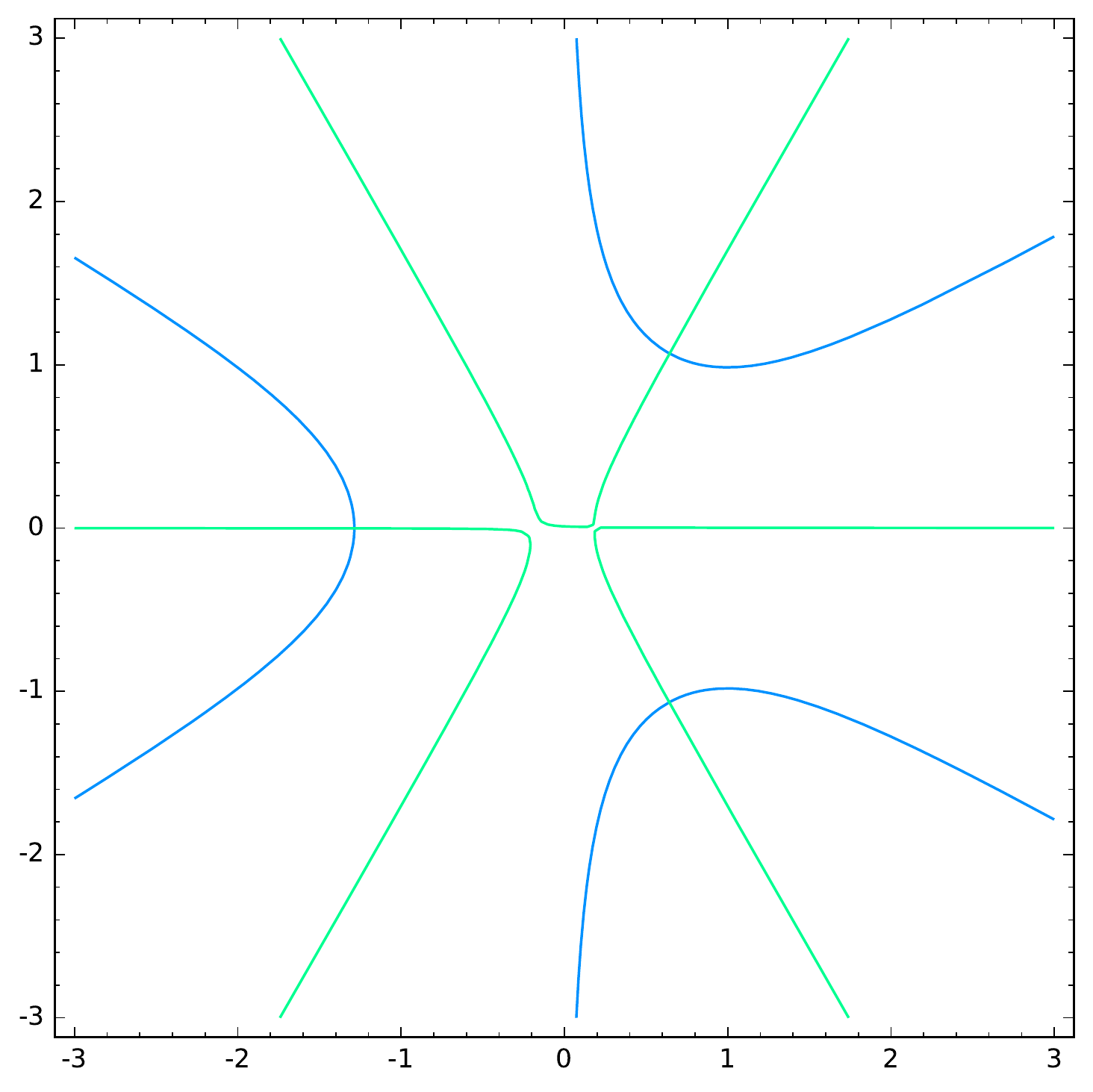}

\centerline{Fig. $5$. $P(z)=z^3-(\dfrac{1}{10}+\dfrac{i}{200})z+1+\dfrac{i}{1000}$.}

\bigskip

 \includegraphics[scale=0.6]{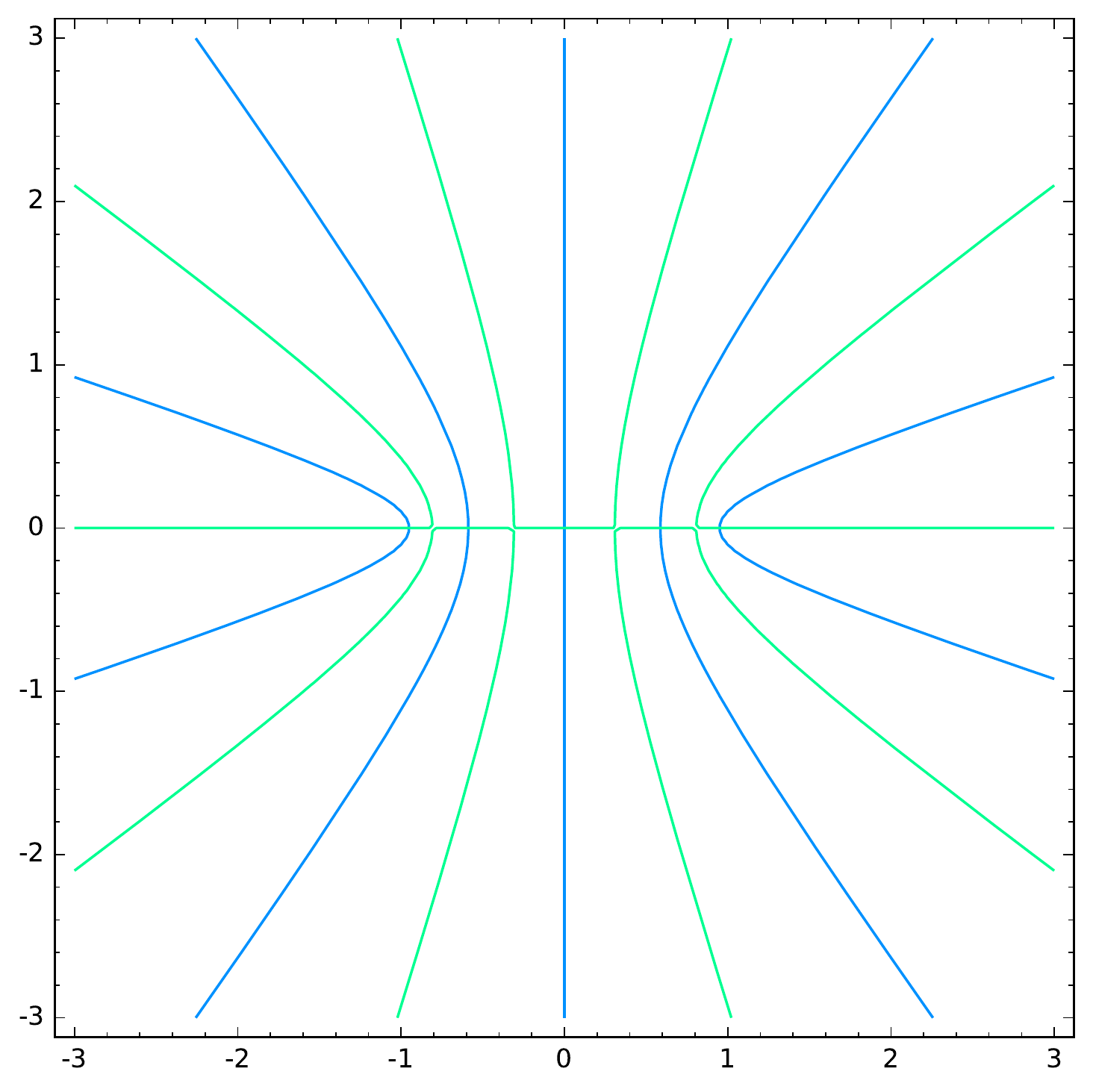}

\centerline{Fig. $6$. $T(5,z)=16z^5-20z^3+5z$.}

Special polynomials have typical pictures. As example see the fifth
Chebyshev polynomial of the first kind in Fig. $6$. One observes that its picture can be smoothed at $4$ places. So, the fifth Chebyshev polynomial
belongs to a cell of codimension $4$. This cell is in the closure of 
$2^4$ cells of bi-regular polynomials. This holds for all degrees: the
Chebyshev polynomial $T_n$ of degree $n$ belongs to a cell of codimension $n-1$ along which $2^{n-1}$ bi-regular cells meet. Incidently, observe that $2^{n-1}$ is the leading coefficient of the polynomial $T_n$. The cell of the signature $\sigma(T_n)$ is the space of all real monic Morse deformations of the polynomial $z^n$ with $n-1$ real critical points. 

\section{Proofs.}

The proofs are based on the Riemann Mapping Theorem in combination with theorems of Reinhold Baer, David Epstein and Jean Cerf
on homotopy versus isotopy and theorems of C.J. Earle and J. Eells on contractability of connected components of groups of diffeomorphism in dimension two.

\begin{proof} {\bf of Theorem 1.3.}
Let $\sigma$ be a signature and let $\gamma$ be a smooth oriented, coloured embedded graph in the class $\sigma$. Let $4d$ be the number of ideal vertices. The $7$ properties allow to construct a smooth function $f:\mathbb{C} \to \mathbb{C}$ such that the following holds.

1. The graph $\gamma$ is the inverse image by $f$ of the union of the real and the imaginary axis.

2. The map $f$ is open with at most  $d-1$ critical points . The determinant of the tangent map $Df$  is positive at all regular points of $f$. At each critical point of $f$ the the germ of $f$ is smoothly equivalent to the germ of $z\in \mathbb{C} \mapsto z^k+t\in \mathbb{C}$ for some $k=1,2, \cdots $ and some $t\in \{+1,-1,+i,-i,0\}$. 

3. The restriction of $f$ to an edge of $\gamma$ is regular and injective.

4. The colourings of regions and edges of $\gamma$ are the pull-backs by $f$ of the  the colourings of $P_z$.

5.   $\lim_{z\in \mathtt{C}, |z|\to +\infty} \dfrac{f(z)}{z^d}=1$.

Let $J$ be the pull-back by $f$ of the standard conformal structure $J_0$ on $\mathbb{C}$ to $\mathbb{C}$. The map $f:(\mathbb{C},J)\to (\mathbb{C},J_0)$ is holomorphic. By the Riemann
mapping theorem a biholomorphic map $\rho:(\mathbb{C},J)\to (\mathbb{C},J_0)$ exists. Indeed, by property $5$ for $f$, the map extends to a self-map of $\mathbb{C}\cup \{\infty\}$. 
Replacing $\rho$ finally by a positive real multiple $\lambda \rho$ the composition $f\circ \rho:\mathbb{C}\to \mathbb{C}$
by Rouch\'e's Theorem will be a monic polymial having a picture in the class of $\gamma$.
\qed
\end{proof}

\begin{proof} {\bf Theorem 1.4.} Consider the signature $\sigma$ as a space $\Gamma$ of smooth oriented planar graphs.
The space $\Gamma$, if equipped with the topology induced by the topology of the oriented arc-length  parametrizations of the edges, is contractible
by Theorems of Baer and Epstein. Given 
$\gamma\in \Gamma$, the space $E_{\gamma}$ with the smooth topology  of functions $f:\mathbb{C}\to \mathbb{C}$ satisfying the $5$ properties as in the previous Theorem is contractible. The space $E_{\Gamma}$ 
of pairs $(f,\gamma)$ with
$\gamma \in \Gamma$ and $f\in E_{\gamma}$ by a Theorem of J. Cerf is the total space of a fiber bundle
$\pi:E_{\Gamma} \to \Gamma,\, (f,\gamma) \mapsto \gamma$. It follows that the space $E_{\Gamma}$ is contractible.

The group $G_{\mathbb{C},\infty}$ of orientation preserving diffeomorphisms of $\mathbb{C}$ extending to $\mathbb{C}\cup \{\infty\}$ as a diffeomorphism with the identity as differential at $\infty$, is contractible. The group $G_{\mathbb{C},\infty}$ acts with closed orbits and without fixed points
on $E_{\Gamma}$. So the space $E_{\Gamma}/G_{\mathbb{C},\infty}$ is contractible. By the Riemann mapping Theorem, there exists in every $G_{\mathbb{C},\infty}$-orbit a unique pair  $(f,\gamma)$ such that the pull back by $f$ of the standard conformal structure on 
$ \mathbb{C}$ is again the  standard structure. In order to achieve uniqueness of the pair $(f,\gamma)$, we require moreover that
$f$, now by Rouch\'e's Theorem a monic polynomial, is balanced. It follows that the space of monic and balanced polynomials with picture in $\Gamma$ is a space of representatives for the quotient 
$E_{\Gamma}/G_{\mathbb{C},\infty}$. 

We conclude that the space of monic balanced polynomial mappings $P$ with picture in the isotopy class $\Gamma$  is contractible. The group of Tschirnhausen substitutions is contractible and acts fixed point free on the space of monic polynomial mappings $P$ with picture in the isotopy class $\Gamma$. Hence, the space of monic polynomial mappings $P$ with signature $\sigma$  is contractible too.
\qed
\end{proof}

{\bf Labelling roots.} Let $r$ be a root of a monic polynomial $P$. 
The root $r$ belongs to a connected component $T$ 
of the picture of $P$. The component $T$ is an coloured oriented tree. We define as label the pair of $(\alpha,\beta)$ consisting of the
$4d$-root of unity. The root of unity  $\alpha=e^{2\pi i k}/2d$, with $k\in \{0,1, \cdots ,2d-1\}$ minimal,  is in fact the $2d$-root of unity, that we get by starting at $r$ and by following in $T$ the oriented edges in $P^{-1}(\mathbb{R})$. The root of unity $\beta=e^{2\pi i (2k+1)/4d}$ with minimal $k\in \{0,1, \cdots ,2d-1\}$ is the root of unity that we get by starting at $r$ and by following in $T$ the oriented edges in $P^{-1}(i\mathbb{R})$. 

Essentially, from its label we can find back the corresponding root by solving differential equations. The map from root to label is constant in each cell of the cell decomposition by signatures.  This property has clearly applications, each time one wishes to follow roots of polynomials continuously in families of polynomials. Robotics typically encounters this wish.

\section{Pictures of meromorphic functions.} The real axis $\mathbb{R}\cup \{\infty\}$ and the imaginary axis $i\mathbb{R}\cup \{\infty\}$, both extended by the point $\infty$, divide the Riemann sphere $P^1(\mathbb{C})=\mathbb{C}\cup \{\infty\}$ in four regions, that again we label by the colours $A,B,C,D$. Define the picture of a rational map $f: P^1(\mathbb{C})\to P^1(\mathbb{C})$ be the inverse image of the union of the extended axis. Similarly, define the picture of a holomorphic map $f:S\to P^1(\mathbb{C})$ on a Riemann surface. Call a rational map or more general a meromorphic function $f$ on a Riemann surface very bi-regular if the critical values do not belong to the extended real or the extended imaginary axis. Call a function $f$ bi-regular if its critical values do not belong to the (non extended) real or imaginary axis. The bi-regular polynomials remain according to this definition bi-regular. 
We plan to study in future from the combinatorial view point these more general settings.   

\section{Face operations, remarks, questions.}  The most non-bi-regular polynomial are $P=(z-r)^d, d\geq 2,\,r\in \mathbb{C}$. Each connected component of bi-regular polynomials of degree $d>1$ has such a  polynomial in its closure. Indeed, let $P$ be bi-regular of degree $d>1$. The family of bi-regular polynomials $t^dP(z/t),\, t\in \mathbb{R},\ t>0,$ has as limit at $t=0$
the polynomial $z^d$.
The  family is an orbit of the weighted homogeneous action of the group of positive real numbers
$$(t,P) \mapsto t\bullet P=t^{{\rm degree}(P)}P(z/t)$$  
In fact the larger group of affine substitutions 
$$z \mapsto z/t+a,\,t>0,\,a\in \mathbb{C}$$
acts on monic polynomials.
We can use this action for simplification and normalization. We define as norm $\Vert P \Vert$ the $l^2$-norm of the vector of coefficients of $P$.
First, for a polynomial $P\not=(z-r)^d$ of degree $d>1$ we simplify 
by a Tchirnhausen substitution $z \mapsto z-a$ killing  the coefficient of the term $z^{d-1}$ in $P$,
next we choose $t>0$ in order to get a polynomial of norm $1$.
This is possible since for $P\not= z^d$ there exists precisely one $t>0$ with $\Vert t\bullet  P\Vert=1$.
So instead of studying the chambers of bi-regular polynomials in the vector space of complex dimension $d$ of all monic polynomials of degree $d$, one can restrict this study to the unit sphere of dimension $2d-3$
in the space of degree $d$ Tchirnhausen simplified polynomials. For instance, it would be interesting to study what happens for $d=3$. One gets a decomposition of the sphere $S^3$ in $22$ contractible components. What is the dual graph?

Face operators correspond to the following two operations on signatures. 
Let $\sigma$ be a signature. Let $\pi$ be a picture in the class $\sigma$. The picture $\pi$ decomposes the plane $\mathbb{C}$ in polygonal regions. The regions have piece-wise smooth curves as boundaries. 

The first operation consists in contracting diagonals of regions. Let $D$ be a smooth generic diagonal in such a region connecting two boundary points, such that the graph $\pi\cup D$ is still a forest. The endpoints of $D$ are smooth points of edges. The new graph $\pi\cup D$ obtained by adding $D$ to the picture $\pi$ does not 
satisfy the $7$ properties. In particular, two vertices are of degree $3$.  Let $\pi_D$ be the planar graph obtained by contracting the diagonal $D$ to a new vertex of degree $4$. The graph $\pi_D$, together with its colouring of edges, labelling of regions,  satisfies the $7$ properties and the class of $\pi_D$ is again a signature $\sigma_D$.  

The second operation consists of contracting an edge of $\pi$ connecting two vertices which are no roots, i.e. vertices not incident with all four colors $A,B,C,D$. Contracting the edge $E$ in $\pi$  transforms $\pi$ to a new picture with the $7$ properties, so constructs a new signature $\sigma_E$. 

The operation of adding and contracting a diagonal 
$D$ to $\sigma$ or the operation of contracting an edge $E$ such that $\sigma_D$ or $\sigma_E$ is again a signature corresponds to a co-dimension one face operation for the cell decomposition of the space $B_d$. 

It is a challenging problem to understand the combinatorics of these face operations and to describe the corresponding cell and co-chain complex for the spaces $B_d$.  

The semi-algebraic cell decomposition of $B_d$ is compatible with a triangulation by a theorem of Stanis$\l{}$aw $\L{}$ojasiewicz. Working in the second barycentric subdivisions allows to construct regular open neigborhoods $U_{\sigma}$ in $B_d$ of closures in $B_d$ of cells $\{P\mid \sigma(P)=\sigma\}\subset B_d$. The integral $\Check{C}$ech-cohomology of the acyclic covering
 $\{U_{\sigma}\}$ of $B_d$   computes the group cohomology $H^*((Br(d),\mathbb{Z})$ of the braid group $Br(d)$. It is a challenging problem to do this computation effectivily.

A third face operation is needed in spaces of polynomials that have roots of multiplicities exceeding one. Instead of contracting edges,
now also contracting minimal subtrees $T$ with two or more edges in $\pi$  such that 
the class $\sigma_T$ of $\pi/T$  is again a signature gives a face operation.  

The unit sphere $S^{2d-2}$ in the space of complex Tchirnhausen simplified  polynomials of degree $d$ has a natural probability measure. A natural question is about the probability that a random polynomial $P$ realizes a given picture? Which picture has highest probability?

The notion bi-regularity suggests two notions of discriminants for polynomials. We define as real discriminant the set $\Delta_{\mathbb{R}}$ of all monic polynomials $P$
of a given degree
such that $0\in \mathbb{R}$ is a critical value of the real part $\textbf{Re}(P)$, and accordingly $\Delta_{i\mathbb{R}}$ all polynomials with $0\in \mathbb{R}$ as critical value for $\textbf{Im}(P)$.
Recall that the classical discriminant $\Delta$ is the set of polynomials $P$ such that the mapping $P:\mathbb{C} \to \mathbb{C}$ has $0\in \mathbb{C}$ as critical value. The complement of the union $\Delta_{\mathbb{R}} \cup \Delta_{i\mathbb{R}}$ is the set of bi-regular polynomials and the classical discriminant $\Delta$
is included in the intersection $ \Delta_{\mathbb{R}} \cap \Delta_{i\mathbb{R}}$.

We get a braid invariant as follows. A braid $b$ defines  an isotopy class of a  closed path of monic polynomials  in the complement of $\Delta$. What is the minimal number of bi-regular chambers that such a path for a given braid has to visit?
\newline
\noindent{\bf Acknowledgement.} This work started in 2014 
at the Graduate School of Sciences, Hiroshima University during the Conference "Branched Coverings, Degenerations and Related Topics". 
The author thanks Professors Makoto Sakuma and Ichiro Shimada for the  warm hospitality and for providing stimulating mathematical  environment. 

\section{Sage and Pari scripts.}

\begin{lstlisting}
"Hello Pari"  Computes the vector of the number of possible
pictures of  the monic degree deg <= g bi-regular polynomials.

vector_numb_pict(g)=
{
X=vector(g,i,0);
X[1]=1;
for(deg=2,g,
        for(a=0,deg-1,
        for(b=0,deg-1-a,
        for(c=0,deg-1-a-b,
        for(d=deg-1-a-b-c,deg-1-a-b-c,
    X[deg]=X[deg]+
                 if(a==0,1,X[a])*
                        if(b==0,1,X[b])*
                             if(c==0,1,X[c])*
                                    id(d==0,1,X[d]);
              ); ); ); );
   );
  X
}
numb_pict(deg)=
{
X=vector_numb_pict(deg);
X[deg]
}

Computes the number of codimension 1 walls in the
space of monic degree deg polynomials.

numb_wall(deg)=
{
if(deg<3,Res=(deg-1)*4,
      X=vector_numb_pict(deg-1);
      Res=2*deg*sum(a=1,deg-1,X[a]*X[deg-a]);
     );
Res
}
\end{lstlisting}
\begin{lstlisting}
"Hello Sage" Draw the picture of a polynomial. 
Here as example P=z^13-6z^7 .....

import matplotlib
p=Graphics()
var('x','y',domain=RR)
z=x+i*y
f=expand(z^13-6*z^7+z^4-z^3+5*z^2+z+3+2*i)
u=(f+conjugate(f))/2
v=-i*(f-u)
p1=implicit_plot(u==0,(x,-4,4),(y,-4,4),color=rainbow(5)[2])
p2=implicit_plot(v==0,(x,-4,4),(y,-4,4),color=rainbow(5)[3])
p=sum([p1,p2])
p.show()
\end{lstlisting}

\noindent
{\bf References.}

Reinhold Baer, Isotopien von Kurven auf orientierbaren, geschlossenen Fl\"achen. Journal f\"ur die Reine und Angewandte Mathematik 159, 101-116 (1928).

David Epstein, Curves on 2-manifolds and isotopies. Acta Mathematica 115, 83-107 (1966).

Jean Cerf, La stratification naturelle des espaces de fonctions diff\'erentiables r\'eelles et le th\'eor\`eme de la pseudo-isotopie, Inst. Hautes \'Etudes Sci. Publ. Math. No. 39 (1970) 5--173.

Earle, C. J. and  Eells, J., The diffeomorphism group of a compact Riemann surface, Bull. Amer. Math. Soc. 73 (1967), no. 4, 557-559. 

Yves Ladegaillerie,  D\'ecoupes et isotopies de surfaces topologiques,
Th\`ese de Doctorat d'\'Etat (1976), Facult\'e des Sciences, Montpellier.

Stan$\l{}$islaw $\L{}$ojasiewicz, Triangulation of semi-analytic sets. Annali della Scuola Normale Superiore di Pisa - Classe di Scienze 18.4 (1964): 449-474. 

N. J. A. Sloane, A Handbook of Integer Sequences, Academic Press, (1973).

OEIS Foundation Inc. (2011), The On-Line Encyclopedia of Integer Sequences, http://oeis.org.

William A. Stein et al. SAGE Mathematics Software (Version 6.1.1),
   The Sage Development Team, YYYY, http://www.sagemath.org.

PARI/GP, version {\tt 2.5.0}, Bordeaux, (2013), {http://pari.math.u-bordeaux.fr/}.

Louis Comtet, Analyse Combinatoire I, II, Collection SUP, Presses Universitaires de France, (1970).

Reinhold Remmert (1998) Classical topics in complex function theory, Springer-Verlag.

Bernhard Riemann (1851) Grundlagen f\"ur eine allgemeine Theorie der Functionen einer ver\"anderlichen complexen Gr\"osse, G\"ottingen.

\bigskip

\noindent{\rm  University of Basel.}\newline
\noindent{norbert\dots acampo at unibas \dots ch}

\end{document}